\newtheorem{theorem}{Theorem}
\newtheorem{definition}[theorem]{Definition}
\newtheorem{example}[theorem]{Example}
\newtheorem{lemma}[theorem]{Lemma}
 \newcommand{\mc}[1]{{\mathcal #1}}
\newcommand{\al}{\alpha}
\newcommand{\de}{\delta}
\newcommand{\si}{\sigma}
\newcommand{\cR}{\mc{R}}
\begin{document}

%\begin{center} \end{center} %\vspace{1cm}
\title{A general renormalization procedure on the one-dimensional lattice and decay of correlations}
\author{ Artur O. Lopes \, } % Autor
%\date{6 de janeiro de 2011}
\date{\today}

\maketitle

\centerline{Inst. de Matem\'atica, UFRGS - Porto Alegre, Brasil}
\bigskip

%\begin{document}

\vspace*{0.5 cm}
%
%	TITULO
%
\bigskip

\begin{abstract} We present a general form of Renormalization   operator $\mathcal{R}$ acting on potentials $V:\{0,1\}^\mathbb{N} \to \mathbb{R}$. We exhibit the analytical expression of the fixed point potential $V$ for such operator $\mathcal{R}$.  This potential can be expressed  in a
naturally way in terms of a certain  integral over the Hausdorff probability  on a Cantor type set on the interval $[0,1]$. This result generalizes a previous one by A. Baraviera, R. Leplaideur and A. Lopes  where the fixed point potential $V$ was of Hofbauer type.

For the potentials of Hofbauer type  (a well known case of phase transition) the decay is like $n^{-\gamma}$, $\gamma>0$

Among other things we present the estimation of the decay of correlation of the equilibrium probability associated to the fixed potential $V$ of our general renormalization procedure. In some cases we get polynomial decay like $n^{-\gamma}$, $\gamma>0$, and in others a decay faster than $n \,e^{ -\, \sqrt{n}}$, when $n \to \infty$.

The  potentials $g$ we consider here are elements  of the so called  family of Walters' potentials on $\{0,1\}^\mathbb{N} $  which generalizes a family of potentials considered initially by F. Hofbauer. For these potentials some explicit expressions for the eigenfunctions are known.

In a final section we also  show that given any choice $d_n \to 0$ of real  numbers varying with $n \in \mathbb{N}$ there exist a potential $g$ on the class defined by Walters which has a invariant probability with such numbers as the coefficients of  correlation (for a certain explicit observable function).

\end{abstract}

\section{Introduction} \label{I}

It is a classical problem in Statistical Mechanics the study of phase transitions (see \cite{FF}, \cite{F}, \cite{Bov} , \cite{Geo}, \cite{Wang1} and \cite{Wang2}) for general potentials. In several examples it is known that  on the point of phase transition one gets polynomial decay of correlation for the associated equilibrium probability. As far as we know there is no unified way for understanding all kinds of phase transitions.

One important technique on the analysis of such class of problems is the use of the renormalization operator (see for instance \cite{Ble}, \cite{Ishii},  \cite{BMM} and \cite{Car}). The potential which is fixed for the renormalization operator in most of the cases is quite relevant for the understanding of the problem.

Phase transitions in the setting of Thermodynamic Formalism has been considered from different points of view (see for instance  \cite{BLL}, \cite{FL}, \cite{Le1}, \cite{Le2}, \cite{Le3}, \cite{BL1} , \cite{BL}, \cite{L0}, \cite{L1}, \cite{CL}, \cite{CL2}, \cite{CR} and \cite{IT}).

In \cite{BLL} it was introduced a renormalization operator (acting on potentials) on the one-dimensional lattice $\{0,1\}^\mathbb{N}$ and the fixed point one  gets is  the well known Hofbauer potential. The equilibrium probability for such potential exhibit polynomial decay of correlation for a certain class of observables.

Here we  consider a more general procedure of renormalization of potentials defined on the lattice $\{0,1\}^\mathbb{N}$. We want to exhibit a fixed point potential for such procedure.
The fixed point potential will be on a class of functions previously considered by P. Walters in \cite{Wal} (which includes the Hofbauer potential).
We are particulary interested in the decay of correlation (for a certain natural function) associated to the equilibrium probability for such potential.

The renormalization operator defined here is globally defined.

In some cases (first type)  the fixed point will produce polynomial decay of correlation for some natural observable.
We point out that in this case in section 5 we will show the exact speed of decay of correlations and not just an upper bound.

In  others cases (second type) not of polynomial type (some of them faster than  $n \,e^{ -\, \sqrt{n}}$, when $n \to \infty$). In this case is an upper bound. This shows that fixed point for the renormalization operator (on the sense described here)  does not necessarily present polynomial decay of correlations (for some natural observable).

\medskip

In order to give an idea to the reader of  the kind of results we will get here we present an example: in a  particular case
the renormalization operator $\mathcal{R}$ acts on continuous potentials $U$, such that $U$ is constant on cylinder sets of the form
$ \overline{0^n\,1}$ and $ \overline{0^n\,1}$, for all $n \in \mathbb{N}.$
For each fixed $n$ the values are the same on both cylinder sets

We define $U_2= \mathcal{R}(U_1)$ in the following way:
suppose $x$ is of the form $x=(0^n\, 1\,z)\in\{0,1\}^\mathbb{N}$, then
$$U_2(x)\,=\,\mathcal{R}(U_1)\,(x) = U_1(0^{3\,n} 1\,z) + U_1(0^{3\,n-2} 1\,z). $$

In this case the fixed point potential $V$ for the renormalization operator  $\mathcal{R}$ is  the function
$$
  V(x)   = -\, \int_K \frac{1}{(n-t)^{\alpha}} d \nu(t),
$$
 when $x = (0^n \,1\,z)$, $n\in \mathbb{N},$ and where $\nu$ is the Hausdorff probability on the classical Cantor set (a subset of the interval $[0,1])$   which has Hausdorff dimension $\alpha=\frac{\log 2}{\log 3}.$

In several distinct classes of problems the fixed point potential for the renormalization operator can be expressed on an integral form associated to a fractal probability (see for instance  \cite{Ble}, \cite{BBM}, \cite{Ishii},  \cite{BMM} and \cite{L2}). Here we will also get a similar kind of expression.

\medskip

We will present now the main definitions giving  more details on the specific problems we want to analyze.

\medskip

We will consider a subclass of potentials $g:\{0,1\}^\mathbb{N}\to \mathbb{R}$ which were described in \cite{Wal}.
In \cite{Wal} it is considered four sequence of real numbers
$a_n,b_n,c_n,d_n$ and constants $a,b,c,d$.
It is introduced the potential $g$  which satisfies
$$ g(0^n1z)= a_n,
g(0^{\infty})=a,
g(10^n1z)=d_n,
g(10^{\infty})=d,$$
$$
g(01^n0z)=b_n,
g(01^{\infty})=b,
g(1^n0z)=c_n,
g(1^{\infty})=c,
$$
and is assumed that $a_n \to a$, $b_n \to b$, $c_n \to c$ and $d_n \to d$.

A potential of this kind is continuous and called a  Walters' potential. Depending of the choice of the sequences some potentials are of H\"older class and others are not.  Theorem 1.1 in \cite{Wal} characterize when a potential of this form  is on the Bowen's class (which contains the H\"older's - or, Lipchitz's -  potentials).

In our model $a=0=c$ and the sequences $b_n=b$ and $d_n=d$ are constant.
Moreover, we are taking $c_n=a_n$, for all  $n$.

Given $a_2,a_3,...,a_n,...<0$ define  $\eta_{j}$, by $\eta_{j}:=e^{(a_2+a_3+...+  a_{j+1})},$ $j\geq 1.$

For the sequence $1\geq\eta_n>0$, $n \in \mathbb{N}$, $n\geq 1$, $\eta_1=1$, we assume that, $ \eta_n \to 0,$
and $\sum_{n=1}^\infty  \eta_n^\beta =W(\beta)>1$, for $\beta>1$.

Particular cases which we are interested are when $\eta_n= e^{ - n^{1/2}}$,  $\eta_n= e^{ - n^{1- \frac{\log 2 }{\log 5}}}$ or $\eta_n= n^{-\gamma}$, $\gamma>2$.

Consider the two infinite collections of cylinder sets given by
\[
	L_n =  \overline{0^n\,1}
	\quad\text{and}\quad
	R_n =\overline{1^n\,0},
	\ \ \text{for all}\ n\geq 1.
\]

 Using these parameters
we can define a continuous potential $g=g_{\beta}:\Omega\to\mathbb{R}$,
which is simply denote by $g$, in the following way: for any $x\in\Omega$ and $\beta\geq 0$
\[
g(x)
=
\begin{cases}
\beta a_n, &\text{if}\ x \in L_n,\ \text{for some}\ n\geq 2;
\\[0.1cm]
\beta a_n, &\text{if}\ x \in R_n,\ \text{for some}\ n\geq 2;
\\[0.1cm]
-\log W(\beta), &\text{if}\ x \in L_1;
\\[0.1cm]
-\log W(\beta), &\text{if}\ x \in R_1;
\\
0, &\text{if}\ x \in \{1^{\infty},0^{\infty}\}.
\end{cases}
\]

This $g$ is a  Walters' potential

In Statistical Mechanics the above parameter $\beta$ is considered the inverse of temperature.

We are mainly interested in potentials $g$ which are not of H\"older type.

A particular example where phase transition occurs is presented on section 2 of \cite{CL2}.

\begin{definition}
Given $A:\Omega= \{0,1\}^\mathbb{N} \to \mathbb{R},$
the Ruelle operator $\mathcal{L}_A$  acts on functions $\psi:\Omega \to \mathbb{R} $ in the following way
$$\varphi(x)=\mathcal{L}_A(\psi)(x)= \sum_{a=0}^1 e^{A(ax)}\psi(ax).$$

By this we mean $\mathcal{L}_A(\psi)=\varphi.$
\end{definition}
From the general result described
in page 1341 in \cite{Wal} applied to our particular case, we get that
the eigenfunction $\varphi_{\beta}$ of the Ruelle operator for $A=\beta\, g$, for $\beta\leq1$, is such that, for any $n\geq 1$

\begin{equation} \label{u1}
\varphi_{\beta}(0^n 1...)
=
\alpha
\,\,
\left(
1 + \frac{1}{\eta_n^{\, \beta}}
\,
\sum_{j=1}^\infty
\frac{\eta_{n+j}^{\, \beta}\,\,}{\lambda(\beta)^{j}}
\right),
\end{equation}
\begin{equation} \label{u2}
\varphi_{\beta}(1^n 0...)
= b(\beta)\, \,\,
\left(
1 + \frac{1}{\eta_n^{\, \beta}}
\,
\sum_{j=1}^\infty
\frac{\eta_{n+j}^{\, \beta}\,\,}{\lambda(\beta)^{j}}
\right),
\end{equation}
\[
	\varphi_{\beta}(0^\infty)=\alpha
	\qquad\text{and}\qquad
	\varphi_{\beta}(1^\infty)=b(\beta),
\]

where $\lambda(\beta)$ is the associated eigenvalue, $\alpha$ and $b(\beta)$ are constants and $\eta_{j}$ was defined before.

We claim that there exists a unique positive eigenfunction for $\beta>1$. This follows from an argument similar to the one used on page 24 of \cite{PP}.

We assume that for $\beta=1$ we get $\lambda(\beta)=1=\alpha=b(\beta)$

When $\beta=1$, it is easy to see that in the case it is well defined the probability $\rho$ on the Borel sets  of $\Omega$, in such way that, for any natural number $q\geq 1$,
\begin{equation} \label{otu}
\rho \left(\overline{0^q1}\right)= \eta_q
\qquad\text{and}\qquad
\rho\left(\overline{1^q0}\right)= \eta_q,
\end{equation}
then, $\rho$ is an eigenprobability for the dual of the Ruelle operator for $g$. This claim can be obtained by generalizing the reasoning of section 5 in \cite{CL2}.

Therefore, from a general procedure (see \cite{PP}) one gets that the probability $\mu$, such that,
\begin{equation} \label{otu1}
\mu \left(\overline{0^q1}\right)= \eta_q \,\varphi (0^n 1...)= \sum_{j=0}^\infty
 \eta_{n+j}\,\,
\qquad\text{and}\qquad
\mu \left(\overline{1^q0}\right)= \eta_q \, \varphi (1^n 0...)= \sum_{j=0}^\infty
 \eta_{n+j}
\end{equation}
is the equilibrium probability  for $g$ (maximizes pressure).
\medskip

We will show in Theorem \ref{tre} that decay of correlation $d_q$,  as a function of $q\in \mathbb{N}$,  of    the equilibrium probability $\mu$ for the potential $g$ and for the indicator of the cylinder $\overline{0}$ is of order
$$ \,\sum_{s=1}^\infty \sum_{k=0}^\infty \eta_{(k+q+s)}. $$

Given a certain sequence $d_q\to 0$, there exists (under mild conditions) a choice of $\eta_n$ such that $d_q = \,\sum_{s=1}^\infty \sum_{k=0}^\infty \eta_{(k+q+s)}.$
Indeed, first for the given sequence $d_q$, $q\in \mathbb{N}$, take inductively $c_n$ such that $c_n= d_n - d_{n+1},$ $n \in \mathbb{N}$.
In this way $\sum_{j=q}^\infty\, c_j = d_q$.

Now, given such sequence $c_n$, $n\in \mathbb{N}$, take a new sequence $ \eta_r$, such that, $\eta_r= c_r - c_{r+1},$ $r\in \mathbb{N}.$ In this case $\sum_{i=p}^\infty\, \eta_i = c_p$.

It follows that for all $q$ we get $d_q =\sum_{s=1}^\infty \sum_{k=0}^\infty \eta_{(k+q+s)}.$

Part of the results presented here were a consequence  of fruitful discussions with A. Baraviera and R. Leplaideur which contributed in a non trivial way to our reasoning. Our thanks to them.

In sections 2 and 3 we present results about the fixed point potential for a certain renormalization operator. On sections 4 and 5 we estimate the decay of correlations for the class o potentials of Walters type and we show that a large class of types of decay  of correlations
are attained by such family. On the appendix we use  results of the previous sections to show that for some of these fixed point potentials the decay is not of purely polynomial type.

\section{A general renormalization - first type} \label{la}

The potentials $f$ which we are interested are on  the Walters' class
\begin{align*}
f(0^n1z)= a_n,\quad
f(0^{\infty})=a,\quad
f(10^n1z)=d_n,\quad
f(10^{\infty})=d,\quad
\\
f(01^n0z)=b_n,\quad
f(01^{\infty})=b,\quad
f(1^n0z)=c_n,\quad
f(1^{\infty})=c,\quad
\end{align*}
and is assumed that $a_n \to a$, $b_n \to b$, $c_n \to c$ and $d_n \to d$.

We will assume that $b_n=d_n=b=d$.

The renormalization operator will act on such functions $f$.

\medskip

Fix a natural number $k\geq 2$.
We will generalize the kind of renormalization described in \cite{BLL} where $k=2$.

\medskip

Now take for $n>1$
$$
   H_k(0^n 1x) = 0^{k\,n\,-(k-2)\,}1x ,
$$
$$
  H_k(1^n 0\,y) = 1^{k\, n\,-(k-2)\,} 0y ,
$$
and
$$
   H_k(0 1^m \,x) = 0\, 1^{m}\,x ,
$$
$$
  H_k(1\,0^m y) = 1\, 0^{m} y ,
$$
and, also $H_k(0^{\infty}) = 0^{\infty}$, $H_k(1^{\infty}) = 1^{\infty}$.

Now define the renormalization operator as
$$
   R_k(V)(x) =  V (H_k(x))+  V (\sigma H_k(x)) + V (\sigma^2 H_k(x))+...+ V (\sigma^{k-1} H_k(x)).
$$
 We will show that it is possible to exhibit a fixed point $V$ for the renormalization operator $R_k$.

 Note that the renormalization operator is globally defined.

 The class of functions we consider here in the  section \ref{I} is large enough to be able to get fixed points $V$. This means we will be able to find a sequence $a_n$, $n \in \mathbb{N}$, which will define such $V$

 An interesting question is to estimate the decay of correlation of the equilibrium probability for the fixed point potential $V$. One can show (see section \ref{dec}) that for the case of renormalization of first type the potential $V$ is such that we get   polynomial decay of correlation.

\bigskip

If $V$ is fixed for the renormalization operator $R_k$, then $V(1^\infty) =V(0^\infty)=0=a=c.$

Moreover,

$$V(01^\infty) = V(01^\infty) +  V(1^\infty) +  V(1^\infty)+..+ V(1^\infty)\, =\,b ,$$
and

$$a_2=V(00\,1^\infty) = V(0^{k  + 2} 1^\infty) +  V(0^{k  +1}\,1^\infty) +...+  V(000 1^\infty)=a_3+ a_4+...+ a_{ k+2}, $$

$$a_3=V(000\,1^\infty) = V(0^{2\,k  + 2} 1^\infty) +  V(0^{2\,k  +1}\,1^\infty) +...+  V(0^{\,k  +3}\,1^\infty)=a_{k+3} + a_{k+4}+...+ a_{ 2\, k+2}.$$

As we will see once we fix the value of $a_2$ and $b$ the potential $V$ will be determined.

The Walters potential $V$ which
is the fixed point for $R_k$ should satisfy for $n \geq 2$:

$$
    a_n = a_{k\,(n-1) +2 } + a_{k\,(n-1) +1 } +...+  a_{k (n-2)+3} ,
$$
$$
   a_n=c_n =  c_{k\,(n-1) +2 } + c_{k\,(n-1) +1 } +...+  c_{k (n-2)+3} ,
$$
and
$$
  d_n = b_n=b=d
$$
are constant.

Now we solve $\alpha(2)$ as the solution of

$$ a_2= -\log \frac{\, 2 + \alpha(2)}{ \, 2 + \alpha(2)-1}.$$

Now, $\alpha(n)$ is defined by induction for $n\geq 2$,
$$ \alpha_{k\,(n-1) +2 } = \alpha_{k\,(n-1) +1 } =...=  \alpha_{k (n-2)+3},$$

and
$$ \alpha_{k\,(n-1) +2 } = k\, \alpha(n)  + (k-2).$$

\medskip

Taking $a_n=d_n$, $n \geq 2$, of the form
$$ a_n= -\,\log \frac{\, n + \alpha(n)}{ \, n + \alpha(n)-1}<0$$
one can show that we get the fixed point solution of the Renormalization operator.

We point out that $a_2$ and $b$ are free.

One can show that in this case for such $V$ there exists $\gamma$ such that
$\log \eta_{n}\sim - \gamma\,\, \log n,$ $n\geq 1.$ If   $\gamma>2$,  one gets polynomial decay of correlation of the form $n^{2 - \gamma}$ similar to the
Hofbauer case. This kind of question was previously consider in \cite{L0} \cite{L1} \cite{FL} \cite{CL2}. This can be also obtained from the general proof of section \ref{dec}.

\section{A general renormalization - second type} \label{las}

We consider in this section a far more general generalization of the renormalization operator and  once more we exhibit the fixed point potential $V$.

We point out that in many examples the fixed point potential of a renormalization operator is described by an expression obtained by the integral of a kernel with respect to a certain fractal probability (see for instance  \cite{Ble}, \cite{BBM}, \cite{Ishii},  \cite{BMM} and \cite{L2}). Here we will get a similar kind of result.

  In order to define the second type renormalization operator, we first introduce the map
  $H$ (which is on Walters' class).  Let $k$ be an integer larger or equal than $2$;
  take $h(0)=0^k$ and $h(1)=1^k$;
  hence we define
  $$
    H(x_1, x_2, x_3, \ldots) = (h(x_1) h(x_2) h(x_3) \ldots).
  $$
  It is easy to see that $\si^k H = H \si$.  Hence
$$
  H(0^n 1 \ldots) = 0^{kn} 1 \ldots
    \qquad \text{say, if $x \in L_n$, then $H(x) \in L_{kn}$}
$$
and
$$
  H(1^n 0 \ldots) = 1^{kn} 0 \ldots
   \qquad \text{say, if $x \in R_n$, then $H(x) \in R_{kn}$}
$$

 We now define the renormalization operator as the map on functions defined as
 follows: fix $l$ such that $2 \leq l \leq k$ and the constants
 $0 \leq c_1 < c_2 < \ldots < c_l \leq k$. Then
$$
  \cR\,\, V (x) :=  V(\si^{c_1}H(x)) + V(\si^{c_2}H(x)) + \cdots + V(\si^{c_l} H (x)) .
$$

One can see that the Renormalization operator of last section (the case $l=k$) is a particular case of this new one .

The equation above for the potential $V$can be rewritten, analogously to the
expressions in the previous section,  as
$$
    R\, a_n = a_{kn-c_1} + a_{kn-c_2} + \cdots  + a_{kn-c_l}
$$
(and a similar expression holds to $c_n$).

We are interested on finding a fixed potential $V$ for such renormalization operator. For a potential on the class of Walters
the fixed point equation is
$$
  a_n = a_{kn-c_1} + a_{kn-c_2} + \cdots  + a_{kn-c_l}.
$$

Then, we get the following result:
\begin{lemma} Given $N \geq 1$ then
$$
   \cR^N V(x) = \sum_j  V(\si^j H^N(x) ,
$$
where $j$ is of the form
$$
   j = b_0 k^0 + b_1 k^1 + b_2 k^2 + \cdots + b_{N-1} k^{N-1},
$$
for $b_i$ chosen on the set $\{ c_1, c_2, \ldots, c_l \}$.
\end{lemma}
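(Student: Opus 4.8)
The plan is to argue by induction on $N$, with the entire engine being the commutation relation $\si^k H = H \si$ recorded just above the statement. For the base case $N = 1$ the claim is merely a rewriting of the definition of $\cR$: taking $j = b_0 k^0 = b_0$ with $b_0$ ranging over $\{c_1, \dots, c_l\}$ reproduces exactly $\cR V(x) = \sum_{m=1}^l V(\si^{c_m} H(x))$.

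For the inductive step I would set $W = \cR^N V$ and use $\cR^{N+1} V = \cR(\cR^N V) = \cR W$, so that $\cR^{N+1} V(x) = \sum_{m=1}^l W(\si^{c_m} H(x))$. Substituting the inductive hypothesis $W(y) = \sum_j V(\si^j H^N(y))$ turns this into a double sum whose typical term is $V\bigl(\si^j H^N \si^{c_m} H (x)\bigr)$, and the whole problem reduces to rewriting the operator $\si^j H^N \si^{c_m} H$ in the normal form $\si^{j'} H^{N+1}$.

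The key computational step, and the only place where anything genuinely has to be verified, is pushing the inner $\si^{c_m}$ through $H^N$. From $\si^k H = H\si$ one obtains by a short auxiliary induction first $H\si^{c} = \si^{ck} H$ and then $H^N \si = \si^{k^N} H^N$, whence $H^N \si^{c_m} = \si^{c_m k^N} H^N$. Therefore
$$\si^j H^N \si^{c_m} H = \si^{\,j + c_m k^N}\, H^{N+1},$$
so the typical term of the double sum becomes $V\bigl(\si^{\,j + c_m k^N} H^{N+1}(x)\bigr)$. Writing $j = b_0 + b_1 k + \cdots + b_{N-1} k^{N-1}$ with each $b_i \in \{c_1,\ldots,c_l\}$ and setting $b_N := c_m$, the new exponent is $j + c_m k^N = b_0 + b_1 k + \cdots + b_N k^N$, again with all digits in $\{c_1,\ldots,c_l\}$. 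As the pair $(j,m)$ ranges over all its values, this produces precisely the sum over length-$(N+1)$ expressions claimed in the lemma, which closes the induction.

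The argument is essentially bookkeeping, and the main obstacle is simply keeping the order of the non-commuting operators straight when commuting $\si$ past $H^N$. I would also remark, for safety, that the sum is best read as indexed by the tuples $(b_0,\ldots,b_{N-1}) \in \{c_1,\ldots,c_l\}^N$ rather than by the integer values $j$ themselves: since $c_l \leq k$ is allowed, the map from digit-tuples to values of $j$ need not be injective, but the induction is unaffected by this because it tracks the tuples throughout.
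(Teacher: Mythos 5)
Your proposal is correct and follows essentially the same route as the paper's own proof: induction on $N$, with the inductive step resting on the commutation identity $H^N \si^{c} = \si^{c k^N} H^N$ derived from $\si^k H = H\si$, so that the typical term $V(\si^j H^N \si^{c_m} H(x))$ normalizes to $V(\si^{j + c_m k^N} H^{N+1}(x))$. Your closing remark that the sum should be read as indexed by digit-tuples $(b_0,\ldots,b_{N-1})$ rather than by the integer values $j$ is a careful clarification the paper omits, but it does not change the argument.
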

\begin{proof}
Indeed, the expression
it obviously valid for $N=1$ (by definition of the operator $\cR$).
Now, admiting that it holds for $N$; we need to show that this implies the
expression for $\cR^{N+1} V$. But,
$$
   \cR^{N+1} V = \cR(\cR^N V) = \cR^N \si^{c_1} H + \cR^N \si^{c_2} H + \cdots +
      \cR^N \si^{c_l}H  =
$$
$$
  \sum_j V(\si^j H^N \si^{c_1} H ) + \ldots
$$
Now, it is easy to see (also by induction) that
$$
  H \si^P H = \si^{Pk} H^2
$$
and
$$
   H^P \si H = \si^{k^P} H^2\,\, = \,\,\si^{k^P} H^{P+1}.
$$
With this two expressions we get
$$
  H^N \si^c H = \si^{c k^N} H^{N+1} .
$$
Then, we can show that $\cR^{N+1} V $ is
$$
   \sum_j V(\si^j H^N \si^{c_1} H ) + \ldots  =
$$
$$
   \sum_j V(\si^j  \si^{c_1 k^N} H^{N+1}  ) + \ldots =
   \sum_j V(  \si^{j + c_1 k^N} H^{N+1}  ) + \ldots ,
$$
where $j = b_0 k^0 + b_1 k^1 + b_2 k^2 + \cdots + b_{N-1} k^{N-1}$; hence
the expression above is indeed
$$
   \sum_J V(  \si^J H^{N+1}  ) + \ldots ,
$$
where $J =  d_0 k^0 + d_1 k^1 + d_2 k^2 + \cdots + d_{N-1} k^{N-1} + d_N k^N$,
showing that $\cR^{N+1}$ satisfies what we claim.
\end{proof}

 Now we want to show that
 we have a fixed point of $\cR$ that
 in the neighborhood of $0^{\infty}$ behaves like
 $$
     V(0^n 1 \ldots) \sim -\,\frac{1}{n^{\frac{\log{l}}{\log{k}}}} .
 $$

As we will see in a moment it will be natural to consider the (Cantor like) set
$K\,=\, K(l, k) $ which is (using the above notation) the closure of the set
$$\{ x = \sum_{n \geq 1} a_i k^{-i}  \;\;
  \text{for $a_i \in \{c_1, \ldots, c_l  \} \,$ }   \}.
$$

Note that when $k=l$ the set K will be the interval $[0,1]$.

\begin{example} Consider the following particular case.

Define $H_3 : \Omega=\{0,1\}^\mathbb{N} \to \Omega $ by:
$$H_3(0^{c_1},1^{c_2},0^{c_3},1,...))=
(0^{3c_1\,},1^{c_2},
0^{c_3},1,\ldots),$$
and
$$
H_3(1^{c_1},0^{c_2}, 1^{c_3},0,\ldots))=
(1^{3c_1},0^{c_2}, 1^{c_3},0,\ldots).
$$

In this case  the renormalization operator is
$$
   R_3(V)(x) =  V (H_3(x))+  V (\sigma\, H_3(x)) + V (\sigma^2\, H_3(x)).
$$

If $x= (0^{c_1},1,..)$ or  $x= (1^{c_1},0,..)$ denote $V(x)= \log \frac{c_1}{c_1 -1
}.$
Note that for each $c_1$ we get that
$$ \log \frac{3\,c_1}{3\, c_1 -1} +  \log \frac{3\,c_1-1}{3\, c_1 -2} + \log \frac{3\,c_1-2}{3\, c_1 -3}= \log \frac{\,c_1}{\, c_1 -1} .$$

In this case $V$ is a fixed point for the renormalization operator.

 We can choose the signal of $V$ and then, we take $V$  in the form
$$  V(1^{c_1},0,..  )\,=\,V(0^{c_1},1,..  )=-\int_0^1 \frac{1}{c_1-t} \, dt= - \log \frac{c_1}{c_1 -1
} .$$

\end{example}

\bigskip

In order to find a good guess (in the general case) for the fixed point potential consider
$$
U(x) = \frac{1}{n^{\al}},  \qquad \text{for any $n$},
$$
when, $x=(0^{n},1,.. )$, or, $x=(1^{n},0,.. )$.

Then, by iteration of $U$ we get:
$$
  \cR^N U(x) = \sum_{j} \frac{1}{(n k^N - j)^{\al}} =
  \sum_j \frac{1}{k^{N \al}} \frac{1}{(n-\frac{j}{k^N})^{\al}},
$$
for $j = b_0 k^0 + b_1 k^1 + b_2 k^2 + \cdots + b_{N-1} k^{N-1}$
(and, $b_i \in \{c_1, \ldots, c_l \}$).

From the above seems natural to try to find a fixed point $V$ of the form
$$V(x)=-\, \int_K \frac{1}{(n-t)^{\alpha}} d \nu(t),$$
when $x=(0^n\, 1...)$,
for some probability $\nu$ on the interval.

\begin{example}

Let us fix $k=3$, $l=2$ and $c_1=0, c_2=2$. In this case  the Cantor set $K$ we generate
 is indeed the classical Cantor set. We can write $K=K_0 \cup K_2$ where
$K_0 = K \cap [0, 1/3]$ and $K_2 = K \cap [2/3, 1]$.

Note that the transformation $T(x)=3\,x $ (mod $1)$ is two to one when restricted to $K.$

Moreover, $T[0,1/3]=[0,1]=T[2/3,1].$

Let $\nu$ be the measure
supported on $K$ and consider $\alpha = \log{2} / \log{3}$, that is the
Hausdorff dimension of $K$.

Note that the Radon-Nykodin derivative of $\nu$ in each injective branch of $T$ restricted to  $K_0 \subset[0,1/3]$ and $K_1 \subset [2/3,1]$
is equal to $2$.

In this case the function
$$
  V(0^n 1...) = -\, \int_K \frac{1}{(n-t)^{\alpha}} d \nu(t)
$$
is a fixed point of the renormalization operator, i.e.,
$$
  V(x) = V(H(x)) + V(\sigma^2 H(x)).
$$
\end{example}

\begin{theorem}
Let $x$ of the form: $x = (0^n 1 \ldots)$, or $x = (1^n 0 \ldots)$,  then
the operator
$$
  \cR \,V (x) :=  V(\si^{c_1}H(x)) + V(\si^{c_2}H(x)) + \cdots + V(\si^{c_l} H (x))
$$
has a fixed point $V$ of the form
$$
  V(x) =-\, \int_K \frac{1}{(n-t)^{\al}}  d\nu(t),
$$
where $\alpha= \frac{\log l}{\log k},$ and
where $\nu$ is the measure over $K$ that maximizes the entropy for the transformation  $T(x)=k\,x $ (mod $1)$ acting on $K$
.
\end{theorem}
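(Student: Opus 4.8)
The plan is to verify $\cR V = V$ pointwise by turning the very definition of $\cR$ into an averaging identity that $\nu$ satisfies through self-similarity; the only arithmetic input is $k^{\al}=l$, which is exactly what the exponent $\al=\log l/\log k$ encodes. First I would unwind the operator on a point $x=(0^n1\ldots)\in L_n$. Since $H$ maps $L_n$ into $L_{kn}$, applying the shift $\si$ exactly $c_i$ times produces $\si^{c_i}H(x)\in L_{kn-c_i}$; the point $x=(1^n0\ldots)\in R_n$ is handled identically because $V$ depends on both families through the same function of the index. Substituting the candidate $V(0^m1\ldots)=-\int_K(m-t)^{-\al}\,d\nu(t)$ into the definition of $\cR$ gives
$$
\cR V(x) = -\sum_{i=1}^{l}\int_K \frac{1}{(kn-c_i-t)^{\al}}\,d\nu(t).
$$

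Next I would rescale each summand. Writing $kn-c_i-t = k\bigl(n-\tfrac{c_i+t}{k}\bigr)$ and pulling out the factor $k^{\al}=l$, each integrand becomes $l^{-1}\bigl(n-S_i(t)\bigr)^{-\al}$, where $S_i(t)=(c_i+t)/k$ are precisely the inverse branches of the expanding map $T(x)=kx\pmod 1$ whose attractor is $K$. Thus
$$
\cR V(x) = -\frac{1}{l}\sum_{i=1}^{l}\int_K \frac{1}{\bigl(n-S_i(t)\bigr)^{\al}}\,d\nu(t).
$$

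The conceptual heart is the invariance of $\nu$. The system $(K,T)$ is topologically conjugate to the one-sided full shift on the $l$ symbols $\{c_1,\dots,c_l\}$ via the coding $\sum a_i k^{-i}$, so its measure of maximal entropy is the uniform Bernoulli measure with digit weights $1/l$. By Hutchinson's theorem this is the self-similar measure characterized by $\nu=\frac1l\sum_{i=1}^l(S_i)_*\nu$, equivalently $\sum_{i=1}^l(S_i)_*\nu=l\,\nu$ — the same factor $l$ already visible as the per-branch Radon--Nikodym derivative in the worked Cantor example above, where $l=2$. Feeding $\psi(s)=(n-s)^{-\al}$ into this identity collapses the previous display:
$$
\cR V(x) = -\frac{1}{l}\sum_{i=1}^{l}\int_K \psi\circ S_i\,d\nu
= -\frac{1}{l}\int_K \psi\,d\!\Bigl(\sum_{i=1}^{l}(S_i)_*\nu\Bigr)
= -\int_K \psi\,d\nu = V(x).
$$

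I expect the identification in the third step to be the main obstacle: one must check that the digit coding is a genuine homeomorphism onto $K$ (so that no two admissible sequences collide and the conjugacy to the full shift is clean), invoke the variational principle to pin the maximal-entropy measure to the uniform Bernoulli weights, and confirm that this $\nu$ coincides with the normalized $\al$-dimensional Hausdorff measure the statement alludes to. Convergence, by contrast, is not delicate: the integral formula is intended for $n\ge 2$ (the value on $L_1,R_1$ being a free boundary constant, exactly as in Section~\ref{la}), and the fixed-point relation only couples indices $kn-c_i\ge k\ge 2$; since $n-t\ge 1$ on $K\subset[0,1]$ in that range, the integrand $\psi$ is bounded and continuous, so the self-similarity identity applies to it without further justification.
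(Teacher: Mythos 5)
Your proposal is correct and follows essentially the same route as the paper's proof: both rescale $(kn-c_i-t)^{-\al}$ by the factor $k^{\al}=l$ and then use the self-similarity of the maximal-entropy measure on the branches $S_i(t)=(c_i+t)/k$ — what you phrase as the Hutchinson identity $\sum_i (S_i)_*\nu = l\,\nu$ is exactly the paper's statement that the Radon--Nikodym derivative of the branch pushforward equals $l$ on each $K^j$. Your added remarks on the conjugacy to the full shift and on restricting to $n\ge 2$ are reasonable polish but do not change the argument.
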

\begin{proof} The reasoning is similar to the last example.
Consider $\alpha= \frac{\log l}{\log k}.$

The transformation  $T(x)=k\,x $ (mod $1)$ acting on $K$ is $l$ to one. The maximal entropy probability $\nu$ has entropy $\log l$.

There exists $l$ sets $K^1, K^2,...,K^l\subset [0,1]$ such that $T (K^j \cap K)=K$, and by considering $T$ restricted to $K^j=[\,c_j/k , (c_{j}+1)/k\,]$, $j=1,2,...,l$, we get an injective map. Consider the induced probability $\nu_j=T^* (\nu)$ on $K_j$.
The Radon-Nykodin derivative of $\nu_j$ with respect to $\nu$ in each set $K^j$ is equal to $ l$.

If $x = (0^n, 1,...)$ (or, $x = (1^n, 0,...)$) we want to show that
$$
 V(x)=  -\,\int_K \frac{1}{(n-t)^{\alpha}} d \nu(t) =-\, \sum_{j=1}^l \int_K \frac{1}{(\,(k\,n - c_j)-t\,)^{\alpha}} d \nu(t)=  \cR V (x).
$$

For each $j$ we have that
$$\int_K \frac{1}{(\,(k\,n - c_j)-t\,)^{\alpha}} d \nu(t) = \int_K \frac{1}{k^\alpha\,(n - \frac{c_j+t}{k}\,)^{\alpha}} d \nu(t) = \int_K \frac{1}{l\,(n - \frac{c_j}{k} - \frac{t}{k}\,)^{\alpha}} d \nu(t). $$

For each $j$ we consider the change of coordinate $t \to t/k$ on the set $K^j$.

Then, we get that
$$\int_K \frac{1}{\,(n - \frac{c_j+t}{k}\,)^{\alpha}} d \nu(t)= \int_{K^j} \frac{1}{\,(n - y\,)^{\alpha}}\, l\, d \nu(y) .$$

As
$$
   \int_K \frac{1}{(n-t)^{\alpha}} d \nu(t) = \sum_{j=1}^l \int_{K^j} \frac{1}{(\,n-t\,)^{\alpha}} d \nu(t)
$$
we get the claim.

\end{proof}

Note that
$$
  -\,\int_K \frac{1}{(n-t)^{\al}}  d\nu(t)\sim -\,n^{-\al},
$$
for large $n$. In this case $\eta_n$ will be of order $e^{-\,n^{1-\alpha}}.$  When $k=5$ and $l=2$ we will get that the equilibrium state for  fixed point $V$ has a decay faster than $ n \,e^{ -\, \sqrt{n}} $ (see Appendix).

\section{the associated Jacobian}

Now, we are interested on the estimation of the decay of correlation associated to equilibrium probabilities associated to the fixed point potentials we get via the renormalization operator. We will show that it is not always of polynomial type.
In fact, we will present the decay of correlation for a general potential of Walters type.

Suppose $g$ is of Walter type. If we add a constant to a potential its equilibrium state will not change. Then, we can assume without lost of generality that the potential $g$ has pressure zero. We want to estimate the decay for the associated equilibrium probability (which is invariant). We have to use the eigenfunction in order to get an associated normalized potential (see \cite{PP}). This normalized potential is sometimes called the Jacobian of the equilibrium probability.

The associated Jacobian $J$ for the equilibrium probability for the potential defined by $g$
at the inverse temperature $\beta=1$ is such that
$\log J= g + \log \varphi- \log (\varphi \circ \sigma)$, where $\varphi$ is the eigenfunction of the Ruelle operator for $g$ (see (\ref{u2}).

We will define $r(q)$ for $q\geq 1$.

We
split the expression of $\log J(x)$ in five cases:

\begin{itemize}

\item[a)]
for $q\geq 2$ and $x \in L_q$ or $x\in R_q$ we have
\begin{align*}
\log J (x)
&=
\beta a_q +
\log \left(
	1+ \eta_q\,\sum_{n=2}^\infty \eta_{(n+q-1)}
\right)
\\
&
\qquad\qquad\qquad\qquad\qquad\quad\quad
-\log \left(
	1+ \eta_{(q-1)}\,\sum_{n=2}^\infty \eta_{(n+q-2)}
\right)
\\
&:=
\beta a_q + \log r(q)- \log r(q-1),
\end{align*}
%%%
%%%
%%%
%%%
\item[b)]
for $x = (0 1^q\,0...) \in L_1$ we have
\[
\log J (x)
=
-\log \left(1+ \eta_q\,\sum_{n=2}^\infty \eta_{(n+q-1)}
\right)
=
-\log(r(q)),
\]
%%%
%%%
%%%
%%%
\item[c)]
for $x = (1 0^q\,1...) \in R_1$ we have
\[
\log J (x)
=
-\log \left(1+ \eta_q\,\sum_{n=2}^\infty \eta_{(n+q-1)}  \right)
=
-\log(r(q)),
\]
%%%
%%%
%%%
%%%
\item[d)] for $x=0^\infty$ or $x=1^\infty$ we have
$J(x)=1$,

\item[e)] for $x=10^\infty$ or $x=01^\infty$ we have
$ J(x)=0$.
\end{itemize}
Note that $r(1) \eta_1= W.$

It follows from \cite{Wal} that when $J$ is well defined the
equilibrium probability $\mu$ for $g$ has Jacobian $J$ and
$$ \mathcal{L}_{\log  J}^* (\mu)=\mu.$$
\bigskip

\section{decay of correlation} \label{dec}

In this section we want to estimate the decay of correlation of the observable
$I_{\overline{0}}$ for the equilibrium probability $\mu$ at
the critical inverse temperature $T$ where $\beta=1=\frac{1}{T}.$

When the potential on the Walters' class is H\"older the decay is exponential (there exists an spectral gap) according to \cite{PP}.

We assume from now on that $\beta=1$.

We will proceed in a similar fashion as in \cite{FL} and \cite{CL2}: we will get the main result via the renewal equation.

We will estimate the asymptotic behavior  with $q$ of the decay of correlation of the observable $I_{\overline{0}}$
\[
\int_{\Omega}
\, (I_{\overline{0}} \circ  \sigma^q)
\,[\,I_{\overline{0}}\, - \mu_1(\overline{0})\,] \, d\mu
\,\sim
\,\alpha(q),
\]
where $\mu$ is the equilibrium probability for $g$, when $\beta=1$.

We will show, as a particular case, that if $\eta_q\sim q^{-\gamma}, $ with $\gamma>2$, then, $\alpha(q)$ goes to zero like
$q^{2-\gamma} $. The next result is quite general.

\medskip

\begin{theorem} \label{tre} The decay of correlation as a function of $q$ for     the equilibrium probability $\mu$ (for the potential $g$) for the indicator of the cylinder $\overline{0}$ is

$$ \sum_{s=1}^\infty \sum_{k=0}^\infty \eta_{(k+q+s)} $$
in the case is well defined.

\end{theorem}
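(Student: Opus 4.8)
The plan is to convert the correlation integral into a transfer-operator expression and then extract its asymptotics through a renewal equation, exactly in the spirit of \cite{FL} and \cite{CL2}. First I would use that $\mu$ is the equilibrium state with normalized Jacobian $J$ (so that $\mathcal{L}_{\log J}^*\mu=\mu$ and $\mathcal{L}_{\log J}1=1$, as recorded at the end of Section 4) to rewrite, by duality,
$$\int_\Omega (I_{\overline 0}\circ\sigma^q)\,[\,I_{\overline 0}-\mu_1(\overline 0)\,]\,d\mu = \int_{\overline 0}\mathcal{L}_{\log J}^q\big(I_{\overline 0}-\mu(\overline 0)\big)\,d\mu = \mu(\overline 0\cap\sigma^{-q}\overline 0)-\mu(\overline 0)^2.$$
Thus the quantity to estimate is the stationary covariance $\mathrm{Cov}_\mu(I_{x_0=0},I_{x_q=0})$, and the guiding heuristic is that this covariance is dominated by the event that coordinates $0$ and $q$ lie in one and the same block of zeros; such a block has length exceeding $q$, so one expects the leading term to be $\mu(\overline{0^{q+1}})$.

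Next I would make this precise with a renewal decomposition. Using \eqref{otu1}, the length distribution of the initial block is $\mu(L_n)=\mu(\overline{0^n1})=\sum_{k\geq 0}\eta_{n+k}$, and I would organize $\mathcal{L}_{\log J}^q$ (equivalently, the backward orbits of $x\in\overline 0$) according to the successive $0\!-\!1$ and $1\!-\!0$ transitions they traverse, i.e.\ according to the renewals of the run structure. This yields a renewal equation of the form $t_q=\sum_{j=1}^q f_j\,t_{q-j}+s_q$, where the $f_j$ are the first-return probabilities to $\overline 0$ (expressible through $\eta$ via the Jacobian $J$ of Section 4, with $\sum_j f_j=1$ in the recurrent case) and the source term $s_q$ collects the same-excursion contribution coming from blocks of length $>q$. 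Writing $F(z)=\sum_j f_jz^j$ and $U(z)=(1-F(z))^{-1}$, the generating function of $t_q$ then factors as a product involving $U(z)$ and the tail generating function of $\eta$.

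Then I would invoke the renewal theorem of Sarig (as used in \cite{FL} and \cite{CL2}) to pass from this convolution identity to the asymptotics. Once the source term is identified with the second tail of $\eta$ and the aperiodicity and recurrence hypotheses are checked, the theorem converts the convolution into the leading behavior
$$\mathrm{Cov}_\mu(I_{x_0=0},I_{x_q=0})\sim \sum_{s=1}^\infty \mu(\overline{0^{q+s}1}) = \sum_{s=1}^\infty\sum_{k=0}^\infty\eta_{(k+q+s)},$$
which is the claimed order; it specializes to $q^{2-\gamma}$ when $\eta_q\sim q^{-\gamma}$ with $\gamma>2$, since the double tail is then of that order. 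Note that $\sum_{s\geq1}\mu(\overline{0^{q+s}1})=\mu(\overline{0^{q+1}})$ modulo the fixed point $0^\infty$, which matches the heuristic above.

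The main obstacle is twofold. First, deriving the renewal equation with the correct coefficients: the Walters potential is of infinite range and the underlying process is a $g$-measure rather than a genuine Markov chain, so the first-return probabilities $f_j$ and the relation $\sum_j f_j=1$ must be read off carefully from $J$. Second, verifying the precise hypotheses of the renewal theorem (aperiodicity, summability of the $f_j$, and finiteness — or otherwise — of the mean return time at $\beta=1$, which distinguishes the positive-recurrent from the null-recurrent regime and dictates which form of the theorem applies) needed to obtain an exact asymptotic equivalence rather than a mere upper bound. The delicate quantitative point is controlling the cancellation of the constant $\mu(\overline 0)^2$ against the different-excursion contributions, so that only the same-excursion term $\mu(\overline{0^{q+1}})$ survives in the leading order.
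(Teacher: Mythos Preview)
Your overall strategy---duality to pass to $\mathcal L_{\log J}^q$, then a renewal/generating-function analysis in the style of \cite{FL} and \cite{CL2}---is the paper's strategy as well. Two implementation choices, however, differ from what the paper actually does.

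First, the paper does not set up a renewal equation for the correlation $t_q$ directly. Instead it fixes the base point $(01\ldots)$ and studies the pointwise defect $V_q=\mu(\overline 0)-\mathcal L_{\log J}^q I_{\overline 0}(01\ldots)$; the tree structure of preimages (as in \cite{FL}) gives the renewal equation $V_q=\sum_{j=1}^{q-1}V_j\,p_{q-j}+K_q$ with $p_q=\eta_q/W$ and an explicit source $K_q$. Separately, and this is the step your sketch compresses, the paper writes $\mathcal L_{\log J}^q I_{\overline 0}(0^s1\ldots)$ as a finite linear combination of the $A_j=\mathcal L_{\log J}^j I_{\overline 0}(01\ldots)$ (formula~(\ref{relo}), which is \emph{not} itself a renewal relation), multiplies by $\mu(\overline{0^s1})=\eta_s\,r(s)$, and only then sums over $s$ to reach the correlation. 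So the argument is two-layered: a genuine renewal at the single point $(01\ldots)$, followed by a weighted sum over starting cylinders.

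Second, the paper does not invoke Sarig's renewal theorem. It solves $V(z)(1-f(z))=K(z)$ by hand, writing $V(z)=\dfrac{K(z)}{1-z}\cdot\dfrac{1-z}{1-f(z))}$ and observing that the second factor is bounded near $z=1$ (under the differentiability assumption $f'(1)\neq 0$), so that $V_q\sim\sum_{n\le q}K_n$. This elementary Tauberian step replaces the abstract renewal theorem you cite; it is cruder (it gives $\sim$ up to a multiplicative constant rather than an exact limit), but it avoids checking the aperiodicity and recurrence hypotheses you flagged as obstacles. Your route would also work and arguably gives sharper constants, but it is not the paper's.
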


{\bf Proof:}
The technique is similar to the one employed in \cite{FL} and \cite{CL2}.

First we need to estimate the asymptotic limit
\[
\mu_1(\overline{0})-\mathcal{L}_{ \log J}^{q} (I_{\overline{0}}) (01..)
.
\]

We claim that
\begin{equation}\label{can1}
\mu_1(\overline{0})-  \mathcal{L}_{ \log J}^{q} (I_{\overline{0}}) (01..) \sim \sum_{n=1}^q\,  \sum_{j=q+ 1}^\infty \eta_{j}.
\end{equation}

We will present the proof of this fact later.

\bigskip

Now, for a fixed $q$ and any $s\geq 2$
we need to evaluate the difference
\[
\mu_1(\overline{0})-
\mathcal{L}_{ \log J}^{q} (I_{\overline{0}})(0^{ s}1..).
\]

For fixed $q$ and variable $s$, let
$
B^s_q=\mathcal{L}_{ \log J}^{q}
(I_{\overline{0}})(0^{s}1..),
$
$
A(t)= \mathcal{L}_{ \log J}^{t} (I_{\overline{0}}) (01...)
$
and finally denote $p^s_n= \frac{r(s+n)\eta_{s+n}}{r(s)\eta_s}$, $n=1,..,q-2$, $s \in \mathbb{N}$.

We denote $\alpha (q,s)=\frac{ \eta_{(s+q)} \,
r(s+q)  }{\eta_s \, r(s)}.$

One can show (see figure 3 page 1092 and expression in the bottom of page 1090  \cite{FL} where a similar case is described) that for any $s,q\geq 2$
$$
B^s_q
= A_{q-1} + e^{a_{s+1}}\, \frac{r(s+1)}{r(s)}  A_{q-2}+  e^{a_{s+1}+a_{s+2} }\, \frac{r(s+2)}{r(s)}  A_{q-3}+...+e^{a_{s+1}+a_{s+2}+...+ a_{q+s-2} }\, \frac{r(s+q-2)}{r(s)}  A_{1}+
$$

$$
e^{a_{s+1}+a_{s+2}+...+ a_{q+s-1} + a_{q+s}}\, \frac{r(s+q)}{r(s)} =
$$

$$\, A_{q-1} +  \frac{r(s+1)\,\eta_{(s+1)} }{r(s)\, \eta_s }
\, A_{q-2} +\, \frac{r(s+2)\eta_{s+2}}{r(s)\eta_s}  A_{q-3}+...+
\frac{r(s+q-2)\,\eta_{(s+q-2)}}{r(s)\,\eta_s} \, A_1+ \frac{r(s+q)\, \eta_{(s+q)}\, \,
  }{r(s)\,\eta_s \, }=
$$

\begin{equation} \label{relo}\, A_{q-1} +  \frac{r(s+1)\,\eta_{(s+1)} }{r(s)\, \eta_s }
\, A_{q-2} +\, \frac{r(s+2)\eta_{s+2}}{r(s)\eta_s}  A_{q-3}+...+
\frac{r(s+q-2)\,\eta_{(s+q-2)}}{r(s)\,\eta_s} \, A_1+ \, \alpha(n,s).
\end{equation}

%The potential is assumed to be such that for each $n$ we get $\alpha(n,s)$ is well defined.

%Note that for fixed $s$ we have $ \sum_{j=1}^\infty p^s_j=1.$

This is not a kind of renewal equation.

Let us now consider for $n\geq 1$ fixed, and  $s\geq 2$
the  sequence $V_n^s = \mu_1(\overline{0}) - B^s_n$.

We also introduce, for $n\geq 1$, the sequences $ V_n=\mu_1(\overline{0}) - A_n$, and

$$\, U_n^s= - \mu(\overline{0}) (\frac{r(s+1)\,\eta_{(s+1)} }{r(s)\, \eta_s }
\, +\, \frac{r(s+2)\eta_{s+2}}{r(s)\eta_s}  +...+
\frac{r(s+q-2)\,\eta_{(s+q-2)}}{r(s)\,\eta_s} \,)- \alpha(n,s).
$$

\medskip

From the equation (\ref{relo})
we deduce that for $q$ fixed
\begin{equation} \label{eer1}  \mu_1(\overline{0})- \mathcal{L}_{ \log
J}^{q} \,I_{\overline{0}}(0^{ s}1..)\,=   \mu(\overline{0}) - B^s_q  = V_q^s =V_{q-1}\, + V_{q-2}\, p_1^s + V_{q-3}\,p_2^s +...+\, V_1\, p^s _{q-2} + U_q^s .
\end{equation}

Remember that (\ref{otu1}) claims that
\begin{equation} \label{uva3} \mu\left(\overline{0^s\,1}\right) = \eta_s\, \varphi_{1}(0^s 1...)= \eta_s\, \, r(s).
\end{equation}

For each fixed $q$ we will have to estimate later for each $s$ the value
$$\mu(\,\overline{0^s\,1}\,)\,
\big[
\mathcal{L}_{ \log
J}^{q} \,I_{\overline{0}}(0^{ s}1..)\, - \mu(\overline{0})
\big]=\,\eta_s\, \, r(s)
\big[
\mathcal{L}_{ \log
J}^{q} \,I_{\overline{0}}(0^{ s}1..)\, - \mu(\overline{0})
\big]=
$$
\begin{equation} \label{eer2}    V_{q-1}\,\eta_s\,  r(s) + V_{q-2}\, r(1+s)\eta_{1+s} + V_{q-3}\, r(2+s)\eta_{2+s} +...+\, V_1\,  r(s+q-2)\eta_{s+q-2} - r(s+q)\eta_{s+q} .
\end{equation}

\medskip

%In the case  $\log \eta_{q}= - \gamma \log q$ we get that
%$U_q^s\sim \frac{(q+s)^{1-\gamma}}{s^{-\gamma} + (s+1)^{-\gamma} +....}\sim \frac{(q+s)^{1-\gamma}}{s^{1-\gamma}}$.

As mentioned before the Ruelle operator
$\mathcal{L}_{ \log J} $ is the dual (in the
$\mathcal{L}^2(\Omega,\mathcal{B},\mu_1)$ sense) of the Koopman operator $\mathcal{K} (\varphi) =
\varphi \circ \sigma$, then, using this duality, expressions (\ref{eer2}) and (\ref{est1})
we get that for fixed $q$ that

\begin{align*}
\int_{\Omega}
(I_{\overline{0}} \circ  \sigma^q)
\,[\,I_{\overline{0}}\, - \mu_1(\overline{0})\,] \, d \mu
&=
\int  I_{\overline{0}}  \, \mathcal{L}_{ \log J}^{q} [\,I_{\overline{0}}\, - \mu(\overline{0})\,] \, d
\mu
\\
&=
\int_{\Omega}
I_{\overline{0}}(x)
\big[ \mathcal{L}_{ \log J}^{q} \,I_{\overline{0}}(x)\, - \mu(\overline{0}) \big]
\, d \mu (x)
\\
&\hspace*{-1.5cm}=
\sum_{s=1}^\infty\,
\int_{\Omega}
I_{\overline{0}}(0^{ s}1..)
\big[
\mathcal{L}_{ \log J}^{q} \,I_{\overline{0}}(0^{ s}1..)\,
-\mu(\overline{0})
\big]
\, d \mu
\\
&=
\sum_{s=1}^\infty
\mu (\overline{0^s\,1}) \,
\big[
\mathcal{L}_{ \log
J}^{q} \,I_{\overline{0}}(0^{ s}1..)\, - \mu(\overline{0})
\big]
\\
& =- \sum_{s=1}^\infty\,\mu(\overline{0})\, (\,r(s+1)\,\eta_{(s+1)}
\, +\, r(s+2)\eta_{s+2}  +...+
r(s+q-2)\,\eta_{(s+q-2)}\,)
\\
&+
\sum_{s=1}^\infty
V_{q-1}\,\eta_s\,  r(s) +...+\, V_1\,  r(s+q-2)\eta_{s+q-2} - r(s+q)\eta_{s+q}\, \sim
\end{align*}

\begin{equation} \label{top}
 \sum_{j=1}^{q-1}
\,  V_{q-j}\,\, \sum_{k=0}^\infty \eta_{(k+s+j-1)}-
\,  \,\,\sum_{s=1}^\infty \sum_{k=0}^\infty \eta_{(k+q+s)} \sim - \sum_{s=1}^\infty \sum_{k=0}^\infty \eta_{(k+q+s)} .
\end{equation}

\medskip

Note that in the case $\eta_k =e^{-\, k^{1/2}}$ we get that $r(s) \eta_s$ is of order $\sqrt{s}\, e^{-\,\sqrt{s}}$ (see Appendix) which goes to zero when $s \to \infty$. Moreover,
$$  \sum_{s=1}^\infty \sum_{k=0}^\infty \eta_{(k+q+s)} \sim q\, e^{-\,\sqrt{q}}.$$

In the case $\log \eta_{n}\sim - \gamma\,\,\log  n,$ $n\geq 1,$ one easily get from the above that the decay of correlation is of polynomial type $n^{2-\gamma}$.

\bigskip

Now, we will prove that
$$
\mu(\overline{0})-  \mathcal{L}_{ \log J}^{q} (I_{\overline{0}}) (01..)=  V_q \sim \sum_{n=1}^q\,  \sum_{j=q+ 1}^\infty \eta_{j}.
$$

Using the scheme of smaller trees on the right side (see \cite{FL} and \cite{CL2}) we get
that the general term of $\mathcal{L}_{ \log J}^q (I_{\overline{0}})
(01...)$ for $1\leq j \leq q-1$ is given by
the following expression
$$
\mathcal{L}_{ \log J}^q (I_{\overline{0}}) (01\ldots)=
\frac{\eta_{(q-1)}}{W(\gamma)}
\,\mathcal{L}_{ \log J}^{1} (I_{\overline{0}})(01\ldots)+\ldots
$$
\begin{equation}
\label{pp1}
+\frac{\eta_2 }{W(\gamma)}\,\mathcal{L}_{
\log J}^{q-2} (I_{\overline{0}}) (01\ldots)+\frac{1}{W(\gamma)}
\,\mathcal{L}_{ \log J}^{q-1} (I_{\overline{0}}) (01\ldots)
\,+ \,\,\,
\frac{\eta_{(q+1)} r(q+1)}{\,\,W(\gamma)}
\end{equation}

 Denote $p_q= \frac{\eta_{q}}{W(\beta)}$, $q \geq 1$, and $\alpha(q)= \frac{\eta_{(q+1)} r(q+1)}{\,\,W(\beta)}$ and $K=\mu(\overline{0})= 1/2= \frac{\sum_q \alpha(q)}{\sum_q \, q \, p_q}.$

Define $V_q= \mu(\overline{0})- A(q)= \mu(\overline{0})-  \mathcal{L}_{ \log J}^{q} (I_{\overline{0}})
(01..)$, for $q\geq 1$.

For example, $V_2= p_1\,V_1 + \mu_1(\overline{0})\,(p_2 + p_3+...) -\alpha(3).$

We want to obtain the behavior of $V(q)$, when
$q\to\infty$. From the renewal equation (\ref{pp1})
we get another renewal equation: for
$q\geq 3$
\begin{equation}\label{newren}
V_q = \sum_{j=1}^{q-1} V_j p_{q-j}
+
\left[
\mu(\overline{0})\, \sum_{j=q}^\infty p_j -\alpha(q)
\right].
\end{equation}

We denote by $K_{q}$
the last term on the above equality, that is,
\begin{equation} \label{Oup}
K_{q}= \mu(\overline{0})\,\sum_{j=q}^\infty p_j- \alpha(q),
\end{equation}

$q \geq 1.$

Note that $K_1= \mu(\overline{0})- \alpha(1)=  \mu(\overline{0})- \mathcal{L}_{ \log J}^{1} (I_{\overline{0}}) (01..)=V_1.$

Note also that

$$\alpha(q) = \frac{\sum_{j=q+1}^\infty \eta_j}{W} $$

and
$$ K_{q}\,= 1/2 \, \sum_{j=q}^\infty \frac{\eta_{j}}{W(\beta)}  - \frac{ \sum_{j=q+ 1}^\infty \eta_{j}}{\,\,W(\beta)}\,\sim \,-\, \sum_{j=q+ 1}^\infty \eta_{j} \to 0,$$
when $q \to \infty$.

%In the case $\log \eta_{n}= - \log n$ we get that  $K_{q}\sim q^{-\beta+1}.$

Now consider the following formal power series
\[
f(z) = \sum_{j=1}^\infty p_j\, z^{j-1},
\quad
V(z) = \sum_{j=1}^\infty V_j\, z^j,
\quad
\text{and}
\quad
K(z) = \sum_{j=1}^\infty K_j\, z^{j-1}.
\]
From the renewal equation (\ref{newren}) we get that
$ V(z) \, f(z) + K(z) = V(z)  .$
Therefore
$$ V(z) = \frac{K(z)}{1-f(z)}=  \frac{K(z)}{1-z}\, \frac{1-z}{1-f(z)}.$$

We assume that $p_n$, $ n \in \mathbb{N}$, is such that $f(z)$ is differentiable on $z=1$  (this is an assumption on  $\eta_n$, $ n \in \mathbb{N}$)
and the derivative is not zero.

Up to a bounded multiplicative constant we get
\[
V(z) \sim \frac{K(z)}{1-z},
\]
and from this we obtain (asymptotically)
$$
 V(z)\sim
\sum_{j=1}^\infty K_j\, z^j \,\,\,\,(1+ z +z^2 + z^3+...)=
$$
$$
 K_1\, \,+\, (K_1 + K_2)\, z + (K_1 + K_2 +K_3) z^2 +  (K_1 + K_2 +K_3 + K_4) z^3. ...
$$

In this way we get the following
recurrence relation: $V_n  \sim K_1
+ K_2+ K_3+...+ K_n.$

\medskip

This argument complete the proof of
\begin{equation}\label{can2}
\mu(\overline{0})-  \mathcal{L}_{ \log J}^{q} (I_{\overline{0}}) (01..)=  V_q \sim \sum_{n=1}^q\,  \sum_{j=q+ 1}^\infty \eta_{j}.
\end{equation}

This goes to zero because $\sum_q\, q \,\eta_q \, < \infty$.

When $\eta_k =e^{-\, k^{1/2}},$ we get that

\begin{equation}\label{est1}
V_k \sim k^2\, e^{-k^{1/2}} .
\end{equation}

In the same way
\begin{equation}\label{can2}
\mu(\overline{0})-  \mathcal{L}_{ \log J}^{q} (I_{\overline{0}}) (10..) \sim \sum_{n=1}^q\,  \sum_{j=q+ 1}^\infty \eta_{j}.
\end{equation}

\qed
\section{Appendix} \label{ap}

First we consider  the case $\eta_k =e^{-\, k^{1/2}}$ and we will take the estimation  (\ref{top}).

The estimate  of $\sum_{k=0}^\infty e^{- \sqrt{n+ j+k}}$, for $n,j$ fixed, can be done via the integral
$$ \int_1^\infty  e^{- \sqrt{n+ j+x}}\, dx.$$

Taking the change of coordinates $y= \sqrt{n+ j+x}$, we get the equivalent expression
$$2 \int_{\sqrt{n+j+1}}^\infty e^{-y}\, y \,dy.$$

As
$$ (e^{-y}\,\, y)\,|_{\sqrt{n+j+1} }^\infty= \int_{\sqrt{n+j+1}} \frac{d\, (e^{-y}\,\, y)}{dy}\, dy =\int e^{-y}\, dy - \int y\, e^{-y}\, dy,$$

we get that $ \int_1^\infty  e^{- \sqrt{n+ j+x}}\, dx$ is of order $\, \sqrt{n+j+1} \,e^{-\,\sqrt{n+j+1}   }.$

Now, for  $n$ fixed, we want to estimate the expression $\sum_{j=1}^\infty \,\sum_{k=0}^\infty e^{- \sqrt{n+ j+k}}= \sum_{j=1}^\infty \,\sum_{k=0}^\infty \eta_{n+ j+k} $.

In order to to that we consider the integral
$$\int_1^\infty\,  \sqrt{n+ x} \,\,e^{ -\, \sqrt{n+ x}}\, dx.$$

Considering the change of coordinates
$y= \sqrt{n+ x}$ we get that
$$ (e^{-y}\,\, y^2)\,|_{\sqrt{n+1} }^\infty= \int_{\sqrt{n+1}} \frac{d\, (e^{-y}\,\, y^2)}{dy}\, dy =2\,\int_{\sqrt{n+1}} e^{-y}\,y\, dy - \int y^2\, e^{-y}\, dy,$$

and finally
$$\sum_{j=1}^\infty \,\sum_{k=0}^\infty \eta_{n+ j+k}\,\sim\,\int_1^\infty\,  \sqrt{n+ x} \,\,e^{ -\, \sqrt{n+ x}}\, dx  \sim n \,e^{ -\, \sqrt{n}} .$$

In a similar way, if
$\eta_k =e^{-\, k^{1-\frac{\log 2}{\log 5}}}$ we get  a similar result, that is, the decay is faster than polynomial.

Indeed, $\frac{\log 2}{\log 5}= 0.4306 $, and then $1-\frac{\log 2}{\log 5}>0.5$. From this follows that
$e^{-\, k^{1-\frac{\log 2}{\log 5}}}\leq e^{-\, k^{1/2}} $. Therefore, for the case of the fixed point $V$ for the renormalization operator when $k=5$ and $l=2$, we get that the associated equilibrium state has a decay faster than
$ n \,e^{ -\, \sqrt{n}} .$ To get the exact speed is not so easy on this case.

%Indeed, the primitive of $e^{(x+n+j)^b }$ is $\frac{\Gamma(\,1/b,(x+n+j)^b\,)}{b} +c.$

%Remember that
%$$ \Gamma(1/b,z) = \int_z^\infty t^{(\frac{1}{b}-1)}\, e^{-t}\, dt.$$

%The indefinite integral
%$$ \int \Gamma(\,1/b,(1+n+y)^b\,) \, dy=$$

%$$\frac{\sum_{j=1}^\infty \,\sum_{k=0}^\infty \eta_{n+ j+k}}{n \,e^{ -\, n^{1-\frac{\log 2}{\log 3}} }  }  $$
%is bounded.


\begin{thebibliography}{99}

\bibitem{Bov} A. Bovier,
Statistical Mechanics of Disordered Systems.
A Mathematical Perspective.
Cambridge University Press (2006).

\bibitem{BLL} A. Baraviera, R. Leplaideur and A. O. Lopes,
The potential point of view for Renormalization,
Stoch. and Dynamics, Vol 12. N 4, 1--34, (2012).



\bibitem{BBM} M. Barnsley, D. Bessis and P. Moussa,
The Diophantine moment problem and the analytic structure in the activity
of the ferromagnetic Ising model, J. Math. Phys. 20, 535--546 (1979)


\bibitem{BMM} D. Bessi, M. Mehta and P. Moussa,
Orthogonal polynomials on a family of Cantor sets and the problem
of iterations of quadratic mappings, Lett. Math. Phys. 6, n. 2, 123-140 (1982)




\bibitem{Ble} P. M. Bleher and E. Zalys,
Asymptotics of the Susceptibility for the Ising Model
on the Hierarchical Lattices, Commun. Math. Phys. 120, 409--436 (1989)


\bibitem{BL} H. Bruin and R. Leplaideur,
Renormalization, thermodynamic formalism and quasi-crystals in subshifts,
\emph{Comm. in Math.  Phys}. 321,  no. 1, 209--247, (2013).

\bibitem{BL1}  H. Bruin and R. Leplaideur,
Renormalization, Freezing Phase Transitions and Fibonacci Quasicrystals,
Ann. Sci. Ec. Norm. Super. (4) 48 (2015), no. 3, 739--763.


\bibitem{Car} J. Cardy,
Scaling and Renormalization in Statistical Physics,
Cambridge Press (1996)


\bibitem{CL} L. Cioletti and A. O.  Lopes,
Interactions, Specifications, DLR probabilities and the
Ruelle Operator in the One-Dimensional Lattice, Discrete and Cont. Dyn. Syst. - Series A, Vol 37, Number 12, 6139 -- 6152 (2017)



\bibitem{CL2} L. Cioletti and A. O.  Lopes, Phase Transitions in One-dimensional
Translation Invariant Systems:
a Ruelle Operator Approach,  Journ. of Statis. Phys. 159 - Issue 6, 1424--1455 (2015)

\bibitem{CR} D. Coronel and J. Rivera-Letelier,
High-order transitions in the quadratic family,
J. Eur. Math. Soc.  17 (2015), no. 11,  2725--2761


\bibitem{FF} B. Felderhof and M. Fisher,
Phase transitions in one-dimensionaI cluster iteration
\emph{Ann. Physics} 58, 176-281, (1970).

\bibitem{F} M. Fisher, The theory or condensation and the critical point,
Physica 3, 255--283, (1967).


\bibitem{FL} A. Fisher and A. O. Lopes, Exact bounds for the polynomial decay
of correlation, 1/f noise
and the CLT for the equilibrium state of a non-H\"older potential,
Nonlinearity, Vol 14, Number 5, pp 1071--1104, (2001).

\bibitem{Geo} H.O. Georgii,
Gibbs Measures and Phase Transitions, Ed. de Gruyter, Berlin,
(1988).


\bibitem{Hof}
F. Hofbauer, Examples for the non-uniqueness
of the Gibbs states,  Trans. AMS, \, (228), 133--141,
(1977)


\bibitem{IT}
G. Iommi, Godofredo and M.  Todd,
Transience in dynamical systems. Ergodic
Theory Dynam. Systems,  no. 5, 1450--1476, (2013).

\bibitem{Ishii} Y. lshii,
Ising Models, Julia Sets, and Similarity
Maximal Entropy Measures, Journal of Statistical Physics, Vol. 78, Nos. 3/4, 1995




\bibitem{Le1}
R. Leplaideur, Chaos: butterflies also generate phase transitions. J. Stat. Phys. 161 (2015), no. 1.

\bibitem{Le2}
R. Leplaideur,
About phase transition and zero temperature. Mathematics in the 21st century, 125-133, Springer Proc. Math. Stat., 98, Springer, Basel, 2015.

\bibitem{Le3}
R. Leplaideur,
From local to global equilibrium states: thermodynamic formalism via an inducing scheme. Electron. Res. Announc. Math. Sci. 21 (2014), 72-79

\bibitem{L0} A. O. Lopes,
A First-Order Level-2 Phase Transition in
Thermodynamic Formalism,
J. Stat. Phys., Vol. 60, N. 3/4, pp. 395--411, (1990).


\bibitem{L1} A. O. Lopes, The Zeta Function, Non-Differentiability of Pressure
and the Critical Exponent of Transition,
\emph{ Advances in Mathematics},
Vol. 101, pp. 133--167, (1993).


\bibitem{L2} A. O. Lopes, Equilibrium Measures for Rational Maps,
Ergodic Theory and Dynamical Systems, Vol. 6, Part 3, pp. 393-399, (1986)




\bibitem{PP}
W. Parry and M. Pollicott.
Zeta functions and the periodic
orbit structure of hyperbolic dynamics.
\emph{Asterisque}, 187--188, (1990).



\bibitem{Pol} M. Pollicott.
Rates of mixing for potentials of summable variation,
Trans. AMS, Vol 352,
Number 2, 843--853, (2000).



\bibitem{Wal} P. Walters,
A natural space of functions for the Ruelle operator theorem,
Ergodic Theory and Dynamical Systems. 27,
1323--1348, (2007).




\bibitem{Wang1}
X. J. Wang, Statistical Physics of Temporal Intermittency,
Phys. Rev. A 40, no. 11, 6647--6661, (1989).




\bibitem{Wang2}
X. J. Wang,
Abnormal flutuations and thermodynamic phase transition in Dynamical Systems,
Phys. Rev. A 39, no. 6, 3214--3217, (1989).





\end{thebibliography}
\end{document}